\documentclass[11pt]{amsart}

\usepackage{enumerate}
\usepackage{amssymb}
\usepackage{amsmath,amscd}
\usepackage[colorinlistoftodos]{todonotes}
\usepackage{amsthm}


\newcommand{\tr}{\operatorname{tr}}


\newcommand{\N}{\mathbb{N}}
\newcommand{\R}{\mathbb{R}}

\newcommand{\C}{\mathbb{C}}

\newcommand{\HH}{\mathbb{H}}
\newcommand{\Oct}{\mathbb{O}}
\newcommand{\F}{\mathbb{F}}


\newtheorem{theorem}{Theorem}

\newtheorem{lemma}[theorem]{Lemma}

\newtheorem{maintheorem}{Theorem}

\theoremstyle{definition}

\theoremstyle{remark}
\newtheorem{remark}[theorem]{Remark}
\newtheorem{example}[theorem]{Example}
\newtheorem{question}[theorem]{Question}

\title{Diameter and focal radius of submanifolds}

\author[R.~Mendes]{Ricardo A. E. Mendes}
\address{University of Oklahoma, USA}
\email{ricardo.mendes@ou.edu}


\subjclass[2020]{ 53A07,  53C35, 53C40}


\begin{document}

\begin{abstract}
In this note, we give a characterization of immersed submanifolds of simply-connected space forms for which the quotient of the extrinsic diameter by the focal radius achieves the minimum possible value of $2$. They are essentially round spheres, or the ``Veronese'' embeddings of projective spaces. The proof combines the classification of submanifolds with planar geodesics due to K. Sakamoto with a version of A. Schur's Bow Lemma for space curves. Open problems and the relation to recent work by M. Gromov and A. Petrunin are discussed.
\end{abstract}
 \maketitle 


\section{Introduction}

Denote by $M^n(\kappa)$ the complete simply-connected $n$-dimensional Riemannian manifold with constant curvature $\kappa$. For simplicity we will assume  $\kappa=0,1,-1$, so that  $M^n(\kappa)$ is the Euclidean space $\R^n$, the unit sphere $S^n$, or the hyperbolic $n$-space, respectively.

Let $M$ be a closed (that is, compact without boundary), smooth, connected manifold, immersed in $M^n(\kappa)$. The \emph{extrinsic diameter} of $M$ is defined as the maximum distance, measured in $M^n(\kappa)$, of any pair $x,y\in M$.  The \emph{focal radius} of $M$ is defined as the supremum of $r>0$ such that the normal exponential map $\nu^r(M)\to M^n(\kappa)$ is a local diffeomorphism when restricted to the set $\nu^r(M)$ of normal vectors of length less than~$r$.

The goal of this note is to prove the following:
\begin{maintheorem}
\label{MT}
Let $M$ be a closed, smooth, connected, immersed submanifold of $M^n(\kappa)$. Then the extrinsic diameter of $M$ is at least $2$ times its focal radius.

Moreover, equality holds if and only if: $M$ is a round sphere, and the immersion is a totally umbilical embedding; or $M$ is a (real, complex, quaternionic, or octonionic) projective space, and the immersion is a ``Veronese'' embedding; or $M$ is a round sphere, and the immersion is the composition of the covering map to the round real projective space with a Veronese embedding.
\end{maintheorem}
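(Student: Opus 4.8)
The plan is to pass from the submanifold $M$ to its geodesics, regarded as curves in the ambient space $M^n(\kappa)$, and to control those curves by the focal radius. Write $r$ for the focal radius of $M$. First I would record the standard relation between focal points and the second fundamental form $\alpha$: along a normal geodesic $t\mapsto\exp_p(t\xi)$, a focal point occurs at the parameter $t$ solving $\operatorname{ct}_\kappa(t)=\lambda$, where $\lambda$ ranges over the eigenvalues of the shape operator $A_\xi$ and $\operatorname{ct}_\kappa$ denotes $1/t$, $\cot t$, or $\coth t$ according to $\kappa=0,1,-1$. Since the normal exponential map is a local diffeomorphism on $\nu^{s}(M)$ for every $s<r$, no focal parameter is smaller than $r$, and because $\operatorname{ct}_\kappa$ is decreasing this bounds every normal curvature $\langle A_\xi v,v\rangle$ by $\operatorname{ct}_\kappa(r)$. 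Taking $\xi$ parallel to $\alpha(v,v)$ then shows that every unit-speed geodesic $\gamma$ of $M$, viewed as a curve in $M^n(\kappa)$, has geodesic curvature $|\alpha(\gamma',\gamma')|\le\operatorname{ct}_\kappa(r)$ at every point.

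Next I would invoke the space-form version of A.~Schur's Bow Lemma: a curve in $M^n(\kappa)$ whose geodesic curvature is pointwise bounded above by that of a convex planar comparison arc of the same length has chord length (extrinsic distance between its endpoints) at least as large as the comparison chord. As comparison curve I take the geodesic circle of radius $r$ lying in a totally geodesic $M^2(\kappa)\In M^n(\kappa)$, whose constant geodesic curvature is exactly $\operatorname{ct}_\kappa(r)$; the sub-arc joining two antipodal points of this circle is convex, has some length $L_0$, and has endpoints at extrinsic distance exactly $2r$. Since $M$ is closed, from every point and in every direction there is a geodesic of $M$ of length $L_0$, and the Bow Lemma applied to it produces two points of $M$ at distance at least $2r$. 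Hence $\diam(M)\ge 2r$, which is twice the focal radius, giving the inequality.

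For the equality case, suppose $\diam(M)=2r$. Then every length-$L_0$ geodesic of $M$ realizes the chord bound $2r$, so the rigidity clause of the Bow Lemma forces each such geodesic to be congruent to the comparison arc: in particular it is planar, with constant geodesic curvature $\operatorname{ct}_\kappa(r)$. As this holds for every initial point and direction, $M$ is a \emph{submanifold with planar geodesics}, all of whose geodesics are circles of one fixed radius. At this point I would apply K.~Sakamoto's classification of submanifolds with planar geodesics, which identifies them as the totally umbilical embeddings (round spheres) and the Veronese embeddings of the real, complex, quaternionic, and octonionic projective spaces, together with the real projective case realized through the double cover $S^n\to\R P^n$. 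Conversely, a direct computation on each model verifies that $\diam=2r$, completing the characterization.

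The step I expect to be the main obstacle is the equality analysis rather than the inequality. The inequality only uses the one-sided Bow Lemma, but the classification requires the sharp \emph{rigidity} form of Schur's theorem for space curves in $M^n(\kappa)$, together with the observation that equality in the diameter bound propagates to \emph{all} geodesics rather than a single extremal one. Making these two ingredients feed cleanly into Sakamoto's hypotheses — in particular checking that ``all geodesics are planar circles of fixed curvature'' is precisely his planar-geodesics condition, and carefully tracking the $\kappa=\pm1$ trigonometry in both the focal-point identity and the Bow-Lemma comparison — is where the care is needed.
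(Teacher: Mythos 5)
Your proposal is correct and follows essentially the same route as the paper: bound the ambient curvature of geodesics by the focal radius via the shape-operator/focal-distance relation, apply the space-form Bow Lemma with rigidity (the paper derives this from a theorem of Alexander--Bishop, handling the antipodal edge case when $\kappa=1$), propagate equality to all geodesics, and invoke Sakamoto's classification of submanifolds with planar geodesics. The only difference is cosmetic: your converse direction (``a direct computation on each model'') is what the paper makes precise by citing Sakamoto's theorem that all geodesics of the model embeddings are planar circles of one fixed length, which simultaneously pins down the focal radius as $r$ and the diameter as $2r$.
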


We remind the reader that totally umbilical spheres in $\R^n$ are distance spheres in affine subspaces. In $S^n$ they are intersections of $S^n\subset \R^{n+1}$ with affine subspaces of $\R^{n+1}$, and similarly for hyperbolic space, via its natural embedding into Minkowski space as a hyperboloid, see \cite[page 28]{Sakamoto77} for more details. Veronese embeddings of (real, complex, quaternionic, or octonionic) projective spaces into spheres are defined using certain spaces of matrices. More generally, one obtains embeddings into $M^n(\kappa)$ by composing these with totally umbilical embeddings of spheres into  $M^n(\kappa)$, see Section \ref{SS:Veronese} for more details.

Theorem \ref{MT} is related to some recent results in \cite{Gromov22, Gromov22designs, Petrunin24tori, Petrunin24Veronese}. See Section \ref{S:open} for more details, and a discussion of open problems.

\subsection*{Acknowledgements}
I would like to thank B. Schmidt, who, in conversations inspired by \cite{IST22}, initially suggested this project. I would also like to thank L. Ni, M. Ghomi, and A. Petrunin for help at later stages.

%
%

\section{Preliminaries}
\subsection{Bow Lemma}


For a unit-speed smooth curve $\gamma\colon\R\to M^n(\kappa)$ (or any Riemannian manifold), the \emph{(geodesic) curvature} at $\gamma(t)$ is defined as $\|\gamma''(t)\|$.

A (geodesic) circle, that is, the set of points  in $M^2(\kappa)$ at a certain constant distance from a point, is a curve of constant geodesic curvature. Composing such a curve with a totally geodesic embedding of $M^2(\kappa)$ in $M^n(\kappa)$ yields what we will refer to as \emph{planar circles}.

We will need the following  consequence of \cite[Theorem 1.1]{AB96}:
\begin{lemma}\label{L:Bow}
Let  $\eta\colon [0,\ell/2]\to M^2(\kappa)$ be a constant-curvature unit-speed parametrization of a half-circle of radius $r$.  Let $\gamma\colon [0,\ell/2]\to M^n(\kappa)$ be a smooth unit-speed curve with geodesic curvature at most that of $\eta$. Then the chord length of $\gamma$ is no smaller than the chord length of $\eta$, that is: 
\[d_{M^n(\kappa)}(\gamma(0),\gamma(\ell/2)) \geq 2r. \] 

Moreover, if equality holds, then $\gamma$ is contained in a planar circle.
\end{lemma}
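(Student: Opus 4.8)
The plan is to deduce Lemma~\ref{L:Bow} from the cited comparison result \cite[Theorem 1.1]{AB96}. The key point is that this lemma is a chord-length comparison for curves of bounded geodesic curvature, and the Alexander--Bishop type theorem should give exactly such a statement: among all unit-speed curves of a fixed length $\ell/2$ whose geodesic curvature is bounded above by a constant $k = 1/r$ (the curvature of the comparison half-circle $\eta$ in the appropriate sign convention for $M^2(\kappa)$), the one minimizing the distance between endpoints is the circular arc of that constant curvature. Thus I first record the precise curvature bound: $\eta$ has constant geodesic curvature $k$ determined by its radius $r$ and the ambient curvature $\kappa$, and the hypothesis is that $\gamma$ has geodesic curvature $\|\gamma''(t)\| \le k$ for all $t$.

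First I would invoke \cite[Theorem 1.1]{AB96} to conclude the inequality $d_{M^n(\kappa)}(\gamma(0),\gamma(\ell/2)) \geq 2r$ directly, since $2r$ is precisely the chord length (diameter) of the half-circle $\eta$. One subtlety is that the cited theorem may be stated for curves in a space of constant curvature, or under an upper bound on sectional curvature, and I must check that a curve in the higher-dimensional space $M^n(\kappa)$ can be compared against a \emph{planar} model curve in $M^2(\kappa)$: the standard device is that $\|\gamma''\|$ is the full geodesic curvature in $M^n(\kappa)$, which dominates the geodesic curvature of any projection, so the planar comparison curve is the extremal configuration. A second subtlety is the length hypothesis: the half-circle has length $\ell/2$ by definition, so $\gamma$ and $\eta$ share the same domain and same arclength, which is exactly what the comparison theorem requires.

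The main work, and the part I expect to be the genuine obstacle, is the rigidity (equality) statement. The inequality is a soft application of the cited theorem, but concluding that equality forces $\gamma$ to lie on a planar circle requires either that \cite[Theorem 1.1]{AB96} itself comes with a rigidity clause, or that I supply the equality analysis. The plan is to argue as follows: if the chord length of $\gamma$ equals $2r$, then $\gamma$ must realize the extremal case in the comparison, which forces its geodesic curvature to be identically equal to $k$ (no slack is allowed, since any interval where $\|\gamma''\| < k$ would strictly increase the chord length by a monotonicity/strict-inequality argument), and moreover forces the osculating behavior to match that of the circular arc. A constant-geodesic-curvature curve in $M^n(\kappa)$ with a fixed value $k$ whose chord attains the planar minimum must be a planar circle: one shows the normal development or the Frenet frame collapses to two dimensions, i.e.\ the curve has vanishing higher curvatures and lies in a totally geodesic $M^2(\kappa) \subset M^n(\kappa)$, on which it is a geodesic circle of radius $r$.

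Concretely, to establish the rigidity I would track the equality discussion in the proof of \cite[Theorem 1.1]{AB96}, which typically proceeds by comparing $\gamma$ with its planar model via a differential inequality for the distance-to-endpoint (or for the angle swept by the velocity vector); equality in that differential inequality propagates to equality at every instant, pinning $\gamma''(t)$ to be the parallel transport of the model's acceleration and forcing the curve into a single totally geodesic surface. The technical heart is verifying that the comparison map preserves the planar structure precisely when no curvature is ``wasted'' into directions normal to the osculating plane; once this is checked, $\gamma$ is identified as a constant-curvature curve in a totally geodesic $M^2(\kappa)$, hence a planar circle of radius $r$, completing the equality characterization.
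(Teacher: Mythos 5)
Your overall strategy---deducing both the inequality and the rigidity from \cite[Theorem 1.1]{AB96}---is the same as the paper's, but there are two genuine gaps. The first is the spherical case $\kappa=1$: the cited theorem can only be applied when the length of the curve plus its chord length is strictly less than $2\pi$, and you never verify this hypothesis. The paper does: since $\ell/2=\pi\sin(r)\le\pi$ and the chord length in $S^n$ is at most $\pi$, the sum is at most $2\pi$, with equality only in the borderline case $r=\pi/2$ (so the curvature bound is $0$ and $\gamma$ is a geodesic) with antipodal endpoints; that case has to be disposed of separately (the conclusions hold trivially for it), and only then can the comparison theorem be invoked. Without this step the application of \cite[Theorem 1.1]{AB96} on the sphere is unjustified, and your remark that ``$\gamma$ and $\eta$ share the same arclength'' addresses a different, much weaker point.

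The second gap is the equality analysis. You hedge between ``the theorem has a rigidity clause'' and ``I will re-derive rigidity by tracking its proof,'' and the sketch you offer is not a proof: in particular, the claim that any interval where $\|\gamma''\|<k$ ``would strictly increase the chord length'' is not a pointwise monotonicity statement you can invoke---chord length is not monotone in the curvature function in that sense. Moreover, your ``direct'' reading of the cited theorem is not the form in which it is used in the paper: there, \cite[Theorem 1.1]{AB96} produces a planar arc $\eta'$ of \emph{constant} curvature $c'\le c$ with the same length and the same chord length as $\gamma$. The inequality then follows from the elementary fact that, among constant-curvature minor arcs of a fixed length in $M^2(\kappa)$, chord length is strictly decreasing in the curvature; in the equality case this strict monotonicity forces $c'=c$, and only at that point does the rigidity clause of \cite[Theorem 1.1]{AB96} apply to place $\gamma$ on a planar circle. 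Your proposal becomes correct if you (i) add the $\kappa=1$ hypothesis check together with its borderline case, and (ii) replace the heuristic equality discussion with this monotonicity argument feeding into the cited rigidity statement, rather than attempting to reconstruct the equality case inside the proof of \cite{AB96}.
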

\begin{proof}
Let $c$ denote the curvature of $\eta$. In the case $\kappa=1$, note that $\ell/2=\pi\sin(r)$, so that the length of $\gamma$ plus its chord length is at most $2\pi$, with equality only when $\gamma$ is a geodesic between a pair of antipodal points in $M^n(\kappa)=S^n$, in which case the conclusions of the Lemma are clearly satisfied. Thus we may assume that the length of $\gamma$ plus its chord length is strictly less than $2\pi$. This means that, for any $\kappa$, we can apply \cite[Theorem 1.1]{AB96}, and conclude that there exists a unit-speed curve $\eta'\colon [0,\ell/2]\to M^2(\kappa)$ with constant curvature $c'\leq c$, and the same chord length as $\gamma$. By elementary geometry  in $M^2(\kappa)$, for minor arcs of a fixed length, the chord length is strictly decreasing as a function of the geodesic curvature (see, for example, \cite[Appendix]{AB96}). Thus $d_{M^n(\kappa)}(\gamma(0),\gamma(\ell/2)) \geq 2r$, and, if equality holds, we must have $c'=c$. Thus the last statement follows from the rigidity statement in  \cite[Theorem 1.1]{AB96}.
\end{proof}
\begin{remark}
In the statement of Lemma \ref{L:Bow}, if one allows the comparison curve $\eta$ to have variable curvature, any length, but requires it to be chord-convex, one arrives at a statement known as the ``Bow Lemma''. In Euclidean space ($\kappa=0$), this is a classical result due to A. Schur, see \cite[page 46]{Chern67}, or \cite[Theorem A, page 31]{HopfLNM1000}\footnote{In the given references, Schur's theorem is only stated for curves in $\R^3$, but, as pointed out in \cite[page 101]{Ni23}, the proof given in \cite{HopfLNM1000} generalizes to curves in $\R^l$ for any $l\geq 2$.}. In the sphere ($\kappa=1$), this was recently obtained in \cite{Ni23}. For a version for CAT$(\kappa)$-spaces, for $\kappa\leq 0$, (but without the rigidity statement), see \cite{Ghomi23}.
\end{remark}

\subsection{Focal radius and normal curvatures} \label{S:focalradius}
Let $M$ be an immersed submanifold of $M^n(\kappa)$, let $p\in M$, and $u\in T_pM$ be a unit vector. The \emph{normal curvature} of $M$ at $p$ in the direction of $u$ is defined as the geodesic curvature at $t=0$ of the unit-speed geodesic starting at $p$ with initial velocity $u$, or, equivalently, $\| I\!I(u,u)\|$, where $I\!I$ denotes the second fundamental form of $M$ in $M^n(\kappa)$. 

Given a unit normal vector $\xi\in\nu_p M$, the \emph{focal distances} in the direction of $\xi$ are the values of $t\in\R$ such that the differential of the normal exponential map at $t\xi$ has positive nullity. Equivalently, $t$ is a focal distance in the direction of $\xi$ if and only if the shape operator $A_\xi$ has an eigenvalue $\lambda$ of the form $\lambda=1/t, \cot(t),\coth(t)$,  for $\kappa=0,1,-1$, respectively, see e.g. \cite[Proposition 1.1]{CR78}. 

Note that, for fixed $p\in M$, the minimum (positive) focal distance $t_{\min}$ over all unit $\xi\in \nu_p M$ and the maximum normal curvature $c_{\max}$ over all unit $u\in T_p M$ satisfy $c_{\max}=1/t_{\min}, \cot(t_{\min}),\coth(t_{\min})$,  for $\kappa=0,1,-1$, respectively. 

In particular, the \emph{focal radius of $M$}, which is equal to the infimum, over all $p\in M$ and all unit $\xi\in\nu_p M$, of the smallest positive focal distance in the direction of $\xi$, is related by analogous formulas to the supremum, over all $p\in M$ and all unit $u\in T_p M$, of the normal curvature.

\subsection{Veronese embeddings of projective spaces}
\label{SS:Veronese}
We will use notations from \cite{Little76}, to which we refer the reader for more details. 

Let $\F$ denote either the reals $\R$, the complex numbers $\C$, the quaternions $\HH$, or the octonions $\Oct$. Let $M^{l+1}(\F)$ denote the real vector space of all $(l+1)\times (l+1)$-matrices with entries in $\F$, where $l\in\N$, except for $\F=\Oct$, for which $l\in\{1,2\}$. The space $M^{l+1}(\F)$ is endowed with the inner product 
\[M_1\cdot M_2=\frac{1}{2}\tr\left(M_1 \overline{M_2^t} + M_2\overline{M_1^t}\right).\]
Let 
\[\F P^l= \{ M\in M^{l+1}(\F) \mid M=\overline{M^t},\ M=M^2,\ \operatorname{rank}(M)=1\}. \]

The subset $\F P^l$ of the Euclidean vector space $M^{l+1}(\F)$ is a connected, closed, smooth, embedded submanifold, which is isometric to the corresponding projective space with the (appropriately rescaled) Fubini--Study metric. 

Geometrically, the points of $\F P^l$ are subspaces of   $\F^{l+1}$ with dimension $1$ over $\F$, and they are identified via the Veronese embedding with the corresponding orthogonal projection matrix. The spaces $\F P^l$ (together with round spheres) also coincide with the class of compact symmetric spaces of rank one.

Note that $\F P^l$ is contained in the sphere of radius $\sqrt{l/(l+1)}$ in the affine subspace of Hermitian matrices with trace $1$, with center $\operatorname{Id}/(l+1)$. 

By rescaling, one thus obtains isometric embeddings of (appropriately rescaled) $\F P^l$ into spheres of any radius. The compositions of these embeddings with isometric embeddings of spheres into $M^n(\kappa)$ as totally geodesic or totally umbilical submanifolds (see \cite[page 28]{Sakamoto77} for more details) will be referred to as the \emph{Veronese embeddings} of $\F P^l$.

\begin{theorem}{\cite[Theorems 1, 2, and their Corollaries]{Sakamoto77}} \label{T:Veronese}
Under a given Veronese embedding, all geodesics of $\F P^l$ are mapped to planar circles of the same length.
\end{theorem}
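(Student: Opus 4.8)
The plan is to reduce to the standard model and then compute the image of a geodesic explicitly. Since rescalings and totally umbilical (in particular totally geodesic) embeddings of spheres into $M^n(\kappa)$ carry planar circles to planar circles and send circles of equal length to circles of equal length (see \cite[page 28]{Sakamoto77}), it suffices to prove both assertions for the standard Veronese embedding $v\mapsto v\overline{v^t}$ of $\F P^l$ into the sphere $S_\rho$ of radius $\rho=\sqrt{l/(l+1)}$ centered at $\operatorname{Id}/(l+1)$ in the affine space of trace-one Hermitian matrices. Here a point of $\F P^l$ is identified with the orthogonal projection $v\overline{v^t}$ onto a line $\F v$, with $v\in\F^{l+1}$ a unit vector. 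The key geometric input is that every geodesic of $\F P^l$ is the image under $v\mapsto v\overline{v^t}$ of a horizontal great circle
\[ \gamma(s)=\cos(s)\,v+\sin(s)\,w \]
in the unit sphere of $\F^{l+1}$, where $v,w$ are unit vectors with $\overline{v^t}w=0\in\F$ (horizontality with respect to the Hopf-type fibration of the unit sphere over $\F P^l$; for $\F=\R$ every great circle is horizontal and the fibration is the antipodal double cover).

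First I would expand $P(s)=\gamma(s)\overline{\gamma(s)^t}$ using the double-angle formulas, obtaining
\[ P(s)=C+\cos(2s)\,A+\sin(2s)\,B, \]
where $C=\tfrac12\bigl(v\overline{v^t}+w\overline{w^t}\bigr)$, $A=\tfrac12\bigl(v\overline{v^t}-w\overline{w^t}\bigr)$, and $B=\tfrac12\bigl(v\overline{w^t}+w\overline{v^t}\bigr)$ are Hermitian. The heart of the matter is then a short computation with the inner product of the excerpt: using the relations $\overline{v^t}v=\overline{w^t}w=1$ and $\overline{v^t}w=\overline{w^t}v=0$, one computes $A\cdot A=B\cdot B=\tfrac12$ and $A\cdot B=0$. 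Consequently $P(s)$ parametrizes a genuine round circle of Euclidean radius $1/\sqrt2$ in the affine $2$-plane $C+\operatorname{span}\{A,B\}$, traversed exactly once as $s$ ranges over an interval of length $\pi$. In particular its length equals $\sqrt2\,\pi$, which does not depend on the chosen geodesic, giving the ``same length'' claim. Finally, since this circle lies on the sphere $S_\rho$, it is a small circle, i.e.\ the intersection of $S_\rho$ with an affine $2$-plane; such a curve is contained in a totally geodesic $2$-sphere and therefore is a planar circle in the sense of the preliminaries. Together with the reduction above, this settles both conclusions for $\F=\R,\C,\HH$.

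The step I expect to be the main obstacle is the octonionic case $\F=\Oct$ (with $l\in\{1,2\}$), where the matrix identities driving the norm and orthogonality computation are delicate because $\Oct$ is non-associative, and products such as $v\overline{v^t}\cdot w\overline{w^t}$ involve several octonionic entries at once. For $\Oct P^1\cong S^8$ the statement reduces to the already-treated round-sphere case. For $\Oct P^2$ the cleanest route is to replace associative matrix multiplication by the product of the exceptional Jordan algebra, in which rank-one projections, the inner product, and geodesics are set up as in \cite{Little76}; the identities $\overline{v^t}v=1$, $\overline{v^t}w=0$ then become Jordan-algebraic relations, and the computation of $A\cdot A$, $B\cdot B$, and $A\cdot B$ goes through verbatim. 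Where individual manipulations can be localized to two octonions and their conjugates, Artin's theorem (any two octonions generate an associative subalgebra) justifies them directly; otherwise one appeals to the Jordan-algebra formalism. Once the standard model is verified in all four cases, the reduction in the first paragraph finishes the proof.
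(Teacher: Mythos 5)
Your computation handles $\F=\R,\C,\HH$ (and $\Oct P^1\cong S^8$) correctly and completely: for those fields the Hopf fibration is a Riemannian submersion onto the Veronese image, so geodesics are exactly the curves $P(s)=\gamma(s)\overline{\gamma(s)^t}$ with $\gamma$ a horizontal great circle; your expansion $P(s)=C+\cos(2s)A+\sin(2s)B$ and the identities $A\cdot A=B\cdot B=\tfrac12$, $A\cdot B=0$ are right; and a circle cut on a round sphere by an affine $2$-plane is contained in a totally geodesic $2$-sphere, hence is a planar circle, with the reduction through rescalings and umbilical sphere embeddings preserving both planarity and equality of lengths. (Note the paper itself offers no proof of this statement --- it is quoted from Sakamoto --- so yours is an independent argument, which is welcome.)

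The gap is in the case you flagged, $\Oct P^2$, but it is not the kind of gap that Artin's theorem or a ``verbatim'' Jordan-algebra computation can close. The problem is not associativity of the norm computations; it is your \emph{key geometric input} itself. For $\Oct P^2$ there is no Hopf-type fibration $S^{23}\to \Oct P^2$ with fiber $S^7$ (its nonexistence is a classical fact, tied to Hopf invariant one and the nonexistence of $\Oct P^3$), and a matrix $v\overline{v^t}$ with $v\in\Oct^3$ a unit vector is a rank-one idempotent only when the entries of $v$ associate. So the statement ``every geodesic of $\Oct P^2$ is the image of a horizontal great circle'' is not available, and identifying what the geodesics of $\Oct P^2$ are in the Jordan model is precisely the content that must be supplied; deferring to \cite{Little76} for ``geodesics set up as in'' assumes the very fact your computation needs. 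A genuine repair uses equivariance and homogeneity rather than computation: the exceptional group $F_4$ acts on $J^3(\Oct)$ by linear isometries preserving $\Oct P^2$, transitively on points, and its isotropy group $\mathrm{Spin}(9)$ acts transitively on unit tangent vectors; hence all geodesic images are congruent under \emph{ambient} isometries, and it suffices to verify one geodesic. One can take that geodesic inside $\R P^2=\Oct P^2\cap J^3(\R)$, which is totally geodesic because it is the fixed-point set of $G_2=\mathrm{Aut}(\Oct)$ acting by isometries, and there your real computation applies. Alternatively, every tangent direction at $\operatorname{diag}(1,0,0)$ involves only two octonions $x,y$, which lie in a quaternion subalgebra $\HH'\subset\Oct$; writing $\Oct=\HH'\oplus\HH' e$, the automorphism $a+be\mapsto a-be$ fixes exactly $J^3(\HH')$, so $\Oct P^2\cap J^3(\HH')$ is totally geodesic and every geodesic of $\Oct P^2$ reduces to your quaternionic case. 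Either supplement turns your proposal into a complete proof.
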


\section{Proof of Theorem \ref{MT}}
Let $r$ denote the focal radius of $M$. Pick any unit-speed geodesic $\gamma:\R\to M$. Then the geodesic curvature of $\gamma$ in $M^n(\kappa)$ is at 
most that of a circle of radius $r$ in $M^2(\kappa)$ (see Section \ref{S:focalradius}). Let $\ell$ denote the length of this circle. 
It follows from Lemma \ref{L:Bow} that the chord length of $\gamma_{[0,\ell/2]}$ is at least $2r$. In particular, the extrinsic diameter of $M$ is at least $2r$.

Suppose the extrinsic diameter of $M$ is equal to $2r$. Then the rigidity statement in Lemma \ref{L:Bow} implies that, for every geodesic $\gamma$, the set $\gamma([0,\ell/2])$ is contained in a totally geodesic copy of $M^2(\kappa)$. This implies $M$ has planar geodesics. By \cite[Theorem 3]{Sakamoto77}, $M$ is either a totally umbilical sphere, or a Veronese embedding of some $\F P^l$, or the composition of the covering map $S^l\to \R P^l$ with a Veronese embedding of $\R P^l$.

Conversely, assume $M$ is either a totally umbilical sphere, or a Veronese embedding of some $\F P^l$. In either case, every geodesic is a planar circle of the same radius $r$ (see Theorem \ref{T:Veronese}). Thus the focal radius of $M$ is $r$, and, since every two points of $M$ can be joined by a geodesic, the extrinsic diameter of $M$ is $2r$.

\section{Remarks and open questions}\label{S:open}
In this section we consider only the case $\kappa=0$, that is, we consider immersions into Euclidean space $\R^n$.

In view of Theorem \ref{MT}, it is natural to ask:
\begin{question}\label{Q:ed/fr}
Fix a connected, closed, smooth manifold $M$. What is the infimum of the quantity (extrinsic diameter)$/$(focal radius) over all smooth immersions of $M$ into Euclidean space $\R^n$ (for all $n$)? Is the infimum achieved? If so, classify the immersions that achieve the infimum.
\end{question}
Note that Theorem \ref{MT} completely solves Question \ref{Q:ed/fr} when $M$ is diffeomorphic to a compact rank-one symmetric space.

Another natural question is obtained by replacing ``extrinsic diameter'' with ``circumradius'' in Question \ref{Q:ed/fr}. By circumradius we mean the smallest radius  of a closed ball in $\R^n$ that contains the image of $M$ under the immersion. After rescaling and translating, one may assume $M$ has circumradius $1$ and is contained in the closed unit ball $B^n$ centered at the origin. Noting that the focal radius is the reciprocal of the maximum normal curvature (see Section \ref{S:focalradius}), one arrives at:
\begin{question}\label{Q:circumradius}
Fix a connected, closed, smooth manifold $M$. What is the infimum of the maximum normal curvature over all smooth immersions of $M$ into $B^n$ (for all $n$)? Is the infimum achieved? If so, classify the immersions that achieve the infimum.
\end{question}

We first note that any immersion of any closed manifold $M$ into $B^n$ must have maximum normal curvature at least $1$. Moreover, if equality holds, then $M$ must be diffeomorphic to a sphere, embedded as a totally geodesic subsphere of $\partial B^n=S^{n-1}$. Indeed, using the Bow Lemma, all geodesics of $M$ must be mapped to great circles in $S^{n-1}$. 

Question \ref{Q:circumradius} has received much attention recently. 
In particular:
\begin{itemize}
\item \cite[1.1.C]{Gromov22} For every $M$, the infimum in Question  \ref{Q:circumradius} is strictly less than $\sqrt{3}$. 
\item \cite{Gromov22designs,Petrunin24tori} For $M$ diffeomorphic to the torus $T^m$, the infimum in Question  \ref{Q:circumradius} is $\sqrt{3m/(m+2)}$, and it is achieved.
\item \cite{Petrunin24Veronese} For $M$ diffeomorphic to some projective plane $\F P^2$, the infimum in Question  \ref{Q:circumradius} is $2/\sqrt{3}$, achieved precisely by the Veronese embeddings.
\end{itemize}

By Jung's Theorem, for any compact subset of $\R^n$ one has:
\[ \sqrt{2} \text{ circumradius} < \text{extrinsic diameter}\leq 2 \text{ circumradius}. \]
Together with the results mentioned above, one obtains:
\begin{itemize}
\item  For every $M$, the infimum in Question  \ref{Q:ed/fr} is strictly less than $2\sqrt{3}$. 
\item For $M$ diffeomorphic to the torus $T^m$, the infimum in Question \ref{Q:ed/fr} is between $\sqrt{2}\sqrt{3m/(m+2)}$ and $2\sqrt{3m/(m+2)}$. Note that, for $m\geq 5$, the lower bound is strictly larger than $2$.
\end{itemize}

The upper bound for the torus obtained above is not sharp:
\begin{example}
Define $\phi:\R^3\to\R^6$ by $\phi(x,y,z)=$
\[\frac{1}{\sqrt{3}}\left(\cos(\sqrt{3}x),\sin(\sqrt{3}x),\cos(\sqrt{3}y),\sin(\sqrt{3}y),\cos(\sqrt{3}z),\sin(\sqrt{3}z)\right)\]
Note that the image of $\phi$ is the Clifford torus contained in the unit sphere $S^5$. Let $X$ be the plane in $\R^3$ defined by the equation $x+y+z=0$. The restriction of $\phi$ to $X$ induces an isometric immersion of a flat $2$-torus into $\R^6$, all of whose normal curvatures are $\sqrt{3m/(m+2)}=\sqrt{3/2}$, see \cite[page 13]{Gromov22designs}. Solving a simple optimization problem yields that the extrinsic diameter of this $2$-torus is $\sqrt{3}$. 
\end{example}

\bibliography{ref}
\bibliographystyle{alpha}
\end{document}